\journal{...}
\newtheorem{theorem}{Theorem}[section]
\newtheorem{lemma}{Lemma}[section]
\numberwithin{equation}{section}
\begin{document}

	\begin{frontmatter}

		\title{An asymptotic formula for Aldaz-Kounchev-Render operators on the hypercube}

		\author[1]{Ana-Maria Acu} 
			\author[2]{Ioan Ra\c sa}
		
		\vspace{10cm}
		

		\address[1]{Lucian Blaga University of Sibiu, Department of Mathematics and Informatics,  Romania, e-mail: anamaria.acu@ulbsibiu.ro}

		\address[2]{Technical University of Cluj-Napoca, Faculty of Automation and Computer Science, Department of Mathematics, Str. Memorandumului nr. 28, 400114 Cluj-Napoca, Romania
			e-mail:  ioan.rasa@math.utcluj.ro }

		\begin{abstract} 	
			{ We prove a version of a conjecture concerning the asymptotic behavior of the Aldaz-Kounchev-Render operators on the hypercube.} 
		\end{abstract}

		\begin{keyword} Aldaz-Kounchev-Render operators; Bernstein operator; Voronovskaja-type formula; tensor product.
			
			\MSC[2010]  41A36
		\end{keyword}
		
	\end{frontmatter}

	\section{Introduction}
	
		Let   $B_{n}^{[1]}:C[0,1]\to C[0,1]$ be the classical Bernstein operator defined as
	$$B_n^{[1]}f(x)=\displaystyle\sum_{i=0}^n f\left(\dfrac{i}{n}\right)p_{n,i}(x),$$ where   $p_{n,i}(x)={n\choose i}x^i(1-x)^{n-i},\, x\in [0,1]$.
	
		For a fixed $j\in {\mathbb N}$, $j\geq 2$ and for  $n\geq j$,  Aldaz, Kounchev and Render \cite{AKR2009} introduced a { polynomial}  operator $B_{n,j}^{[1]}: C[0,1]\to C[0,1]$ that fixes $e_0$ and $e_j$, investigated its approximation properties and gave applications to CAGD.  The operator  is explicitly given by
	\begin{equation*}
		B_{n,j}^{[1]}f(x)=\displaystyle\sum_{k=0}^n f\left(t_{n,k}^j\right)p_{n,k}(x),\end{equation*}
	where $$t_{n,k}^j=\left(\dfrac{k(k-1)\dots(k-j+1)}{n(n-1)\dots(n-j+1)}\right)^{1/j}.$$
	The Voronovskaja-type formula for the sequence $(B_{n,j}^{[1]})_{n\geq 1}$ was conjectured in \cite{Rasa2012} and proved in \cite{Bir2017}, \cite{Gavrea}.
	
	For $f\in C([0,1]^2)$,  the tensor product  $B_{n}^{[1]}\otimes B_{n}^{[1]}$ is given by
	\begin{equation}\label{Bernstein} B_{n}^{[2]}
		f(x,y):=(B_{n}^{[1]}\otimes B_{n}^{[1]})f(x,y)=\displaystyle\sum_{k=0}^{n}\sum_{l=0}^{n}f\left(\dfrac{k}{n}, \frac{l}{n}\right)p_{n,k}(x) p_{n,l}(y). \end{equation}
		Let   $B_{n,j}^{[1]}:C[0,1]\to C[0,1]$ be the AKR operator  and $(x,y)\in [0,1]^2$. Then, for $f\in C([0,1]^2)$,  the tensor product $B_{n,j}^{[1]}\otimes B_{n,j}^{[1]}$  is given by
	\begin{align}\label{Aldaz}B_{n,j}^{[2]}f(x,y)&:=(B_{n,j}^{[1]}\otimes B_{n,j}^{[1]})f(x,y)\nonumber\\
		&=\displaystyle\sum_{k=0}^{n}\sum_{l=0}^{n}f\left(t_{n,k}^j,t_{n,l}^j\right)p_{n,k}(x) p_{n,l}(y),  \,\, (x,y)\in [0,1]^2.\end{align}
	
	A conjecture concerning the Voronovskaja-type formula for the sequence $(B_{n,j}^{[2]})$ was formulated in  \cite{RIMA}. The aim of this paper is to prove a version of this conjecture.

	\section{Proof of Conjecture} For the sake of conciseness we consider only the case $j=2$, but obviously the proof can be extended to arbitrary $j$.
	
	Let $k$ and $n$ be integers, $n\geq 2$, $0\leq k\leq n$. Define
	$$R(n,k):=\dfrac{k}{n}-\sqrt{\dfrac{k(k-1)}{n(n-1)}}-\dfrac{1}{2n}+\dfrac{k}{2n^2}.  $$
	It is elementary to prove that
	\begin{align}
	&R(n,0)=-\dfrac{1}{2n},\label{eq.2.1}\\
	&R(n,k)\geq 0,\,\,k=1,2,\dots,n,\label{eq.2.2}\\
	& 0\leq \dfrac{k}{n}-\sqrt{\dfrac{k(k-1)}{n(n-1)}}\leq \dfrac{1}{n}.\label{eq.2.3}
	\end{align}
\begin{lemma} If $0<x\leq 1$, then
	\begin{equation}
		\displaystyle\lim_{n\to\infty}n\sum_{k=1}^n p_{n,k}(x)R(n,k)=0.\label{eq.2.4}
	\end{equation}
	\end{lemma}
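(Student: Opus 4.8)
The plan is to derive a clean pointwise upper bound of the form $R(n,k)\le C/(nk)$, uniform in $k$, and then combine it with the classical estimate $\sum_{k=1}^n p_{n,k}(x)/k=O\!\left(1/(nx)\right)$. Together with $R(n,k)\ge 0$ from \eqref{eq.2.2}, this squeezes the weighted sum to $0$.

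First I would rationalize $g(k):=\frac kn-\sqrt{\frac{k(k-1)}{n(n-1)}}$ by multiplying with its conjugate, obtaining
$$g(k)=\frac{k(n-k)}{n^2(n-1)D},\qquad D:=\frac kn+\sqrt{\frac{k(k-1)}{n(n-1)}}.$$
By \eqref{eq.2.3} we have $0\le\sqrt{\frac{k(k-1)}{n(n-1)}}\le\frac kn$, hence $\frac kn\le D\le\frac{2k}{n}$. Writing $R(n,k)=g(k)-\frac{n-k}{2n^2}=\frac{n-k}{n^2}\bigl(\frac{k}{(n-1)D}-\frac12\bigr)$, the difficulty is that $g(k)$ and $\frac{n-k}{2n^2}$ nearly cancel, so the plain interval bound on $D$ is not sharp enough. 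Capturing this cancellation is the main obstacle.

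To handle it I would rationalize a second time. From $\frac{k}{(n-1)D}-\frac12=\frac{2k-(n-1)D}{2(n-1)D}$ and conjugation of the numerator,
$$2k-(n-1)D=\frac{k(n+1)}{n}-\sqrt{\frac{(n-1)k(k-1)}{n}}=\frac{k\,(n^2+3kn+k-n)}{n^2E},\qquad E:=\frac{k(n+1)}{n}+\sqrt{\frac{(n-1)k(k-1)}{n}}.$$
Collecting the factors gives the closed form
$$R(n,k)=\frac{(n-k)\,k\,(n^2+3kn+k-n)}{2\,n^4(n-1)\,D\,E}.$$
Now $D\ge\frac kn$, $E\ge\frac{k(n+1)}{n}\ge k$, $n-k\le n$, and $n^2+3kn+k-n\le 5n^2$ for $1\le k\le n$, $n\ge 2$, so every factor is controlled and one reads off $0\le R(n,k)\le\frac{5}{2(n-1)k}\le\frac{5}{nk}$.

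It then remains to bound $\sum_{k=1}^n p_{n,k}(x)/k$. Here I would use the identity $\frac1{k+1}\binom nk=\frac1{n+1}\binom{n+1}{k+1}$, which yields $\frac{1}{k+1}p_{n,k}(x)=\frac{1}{(n+1)x}\,p_{n+1,k+1}(x)$; summing and using $\frac1k\le\frac2{k+1}$ gives $\sum_{k=1}^n\frac{p_{n,k}(x)}{k}\le\frac{2}{(n+1)x}$ for $0<x\le 1$. Consequently
$$0\le n\sum_{k=1}^n p_{n,k}(x)R(n,k)\le 5\sum_{k=1}^n\frac{p_{n,k}(x)}{k}\le\frac{10}{(n+1)x},$$
where the lower bound uses \eqref{eq.2.2}. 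Since the right-hand side tends to $0$ as $n\to\infty$, the squeeze theorem proves \eqref{eq.2.4}.
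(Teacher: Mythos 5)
Your proof is correct, and it takes a genuinely different route from the paper. The paper deduces \eqref{eq.2.4} ``softly'': it applies the known Voronovskaja formulas for the univariate operators $B_{n,2}^{[1]}$ (from \cite{Bir2017}, \cite{Gavrea}) and $B_n^{[1]}$ to the single function $f(t)=t$, obtaining $\lim_{n\to\infty} n\sum_{k=1}^n p_{n,k}(x)\left(\sqrt{\tfrac{k(k-1)}{n(n-1)}}-\tfrac kn\right)=-\tfrac{1-x}{2}$, and then rewrites the left-hand side via $R(n,k)$ and the elementary limit $\lim_{n\to\infty}\left(B_n^{[1]}(1-t;x)-(1-x)^n\right)=1-x$, forcing the $R$-sum to vanish. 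You instead prove a quantitative pointwise bound: your double rationalization leading to the closed form $R(n,k)=\frac{(n-k)\,k\,(n^2+3kn+k-n)}{2n^4(n-1)DE}$ is algebraically correct (e.g.\ at $k=1$ it reproduces $R(n,1)=\frac{1}{2n}+\frac{1}{2n^2}$), the factor bounds $D\ge k/n$, $E\ge k$, $n-k\le n$, $n^2+3kn+k-n\le 5n^2$ legitimately give $0\le R(n,k)\le\frac{5}{2(n-1)k}\le\frac{5}{nk}$ for $n\ge 2$, and the identity $\frac{1}{k+1}p_{n,k}(x)=\frac{1}{(n+1)x}p_{n+1,k+1}(x)$ correctly yields $\sum_{k=1}^n p_{n,k}(x)/k\le\frac{2}{(n+1)x}$, so the squeeze $0\le n\sum_{k=1}^n p_{n,k}(x)R(n,k)\le\frac{10}{(n+1)x}$ closes the argument. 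The trade-off: the paper's proof is short but rests on the nontrivial univariate asymptotics of \cite{Bir2017} and \cite{Gavrea}, which are used nowhere else in the paper; yours is elementary and self-contained, and moreover delivers an explicit rate $O\!\left(\frac{1}{nx}\right)$, hence uniform convergence in \eqref{eq.2.4} on every interval $[\varepsilon,1]$ --- information the paper's purely qualitative limit argument cannot give.
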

\begin{proof} Let $x\in (0,1]$ and $f\in C^2[0,1]$. It is known (see \cite{Bir2017}, \cite{Gavrea}) that
	$$ \displaystyle\lim_{n\to\infty} n(B_{n,2}^{[1]}f(x)-f(x))=\dfrac{x(1-x)}{2}f^{\prime\prime}(x)-\dfrac{1-x}{2}f^{\prime}(x).  $$
	It is also well known that
	$$ \displaystyle\lim_{n\to\infty} n(B_n^{[1]}f(x)-f(x))=\dfrac{x(1-x)}{2}f^{\prime\prime}(x). $$
	It follows that
	$$\displaystyle\lim_{n\to\infty}n\left(B_{n,2}^{[1]}f(x)-B_n^{[1]}f(x)\right)=-\dfrac{1-x}{2}f^{\prime}(x).  $$
	In particular, for the function $f(t)=t$, we get
	$$ \displaystyle\lim_{n\to\infty}n\displaystyle\sum_{k=1}^n p_{n,k}(x)\left(\sqrt{\dfrac{k(k-1)}{n(n-1)}}-\dfrac{k}{n}\right) =-\dfrac{1-x}{2}. $$
	This can be written as
	$$ \displaystyle\lim_{n\to\infty}n\sum_{k=1}^n p_{n,k}(x)\left[\dfrac{1}{2n}\left(1-\dfrac{k}{n}\right)+R(n,k)\right]=\dfrac{1-x}{2},  $$
	i.e.,
	\begin{equation}
		\label{eq.2.5}\dfrac{1}{2}\lim_{n\to\infty}\sum_{k=1}^n p_{n,k}(x)\left(1-\dfrac{k}{n}\right)+\lim_{n\to\infty} n\sum_{k=1}^n p_{n,k}(x)R(n,k)=\dfrac{1-x}{2}.
	\end{equation}
Let us remark that
$$ \dfrac{1}{2}\lim_{n\to\infty}\sum_{k=1}^n p_{n,k}(x)\left(1-\dfrac{k}{n}\right)=\dfrac{1}{2}\lim_{n\to\infty}\left(B_n^{[1]}(1-t;x)-(1-x)^n\right)=\dfrac{1}{2}(1-x). $$
Combined with (\ref{eq.2.5}) this leads to (\ref{eq.2.4}), and the proof is finished.
	\end{proof}
\begin{theorem}
	Let $0<x\leq 1$, $0<y\leq 1$, $f\in C^2([0,1]^2)$. Then
\begin{align}
&	\displaystyle\lim_{n\to\infty} n\left(B_{n,2}^{[2]}f(x,y)-f(x,y)\right)\nonumber\\
	&=\dfrac{x(1-x)}{2}f_{x^2}^{\prime\prime}(x,y)+\dfrac{y(1-y)}{2}f_{y^2}^{\prime\prime}(x,y)-\dfrac{1-x}{2}f_x^{\prime}(x,y)-\dfrac{1-y}{2}f_y^{\prime}(x,y).\label{eq.2.6}
\end{align}
\end{theorem}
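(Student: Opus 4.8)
The plan is to compare the bivariate AKR operator with the bivariate Bernstein operator and to split
\[
B_{n,2}^{[2]}f-f=\left(B_{n,2}^{[2]}f-B_{n}^{[2]}f\right)+\left(B_{n}^{[2]}f-f\right).
\]
The second summand is classical: since $B_{n}^{[2]}=B_{n}^{[1]}\otimes B_{n}^{[1]}$ and the Bernstein nodes satisfy $\sum_k p_{n,k}(x)(k/n-x)=0$ together with $\sum_k p_{n,k}(x)(k/n-x)^2=x(1-x)/n$, the tensor-product Voronovskaja formula gives
\[
\lim_{n\to\infty}n\left(B_{n}^{[2]}f(x,y)-f(x,y)\right)=\frac{x(1-x)}{2}f_{x^2}^{\prime\prime}(x,y)+\frac{y(1-y)}{2}f_{y^2}^{\prime\prime}(x,y),
\]
with no mixed term (the vanishing of the first Bernstein moments also kills the $f_{xy}$ contribution). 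Thus the whole problem reduces to showing that the difference $n\left(B_{n,2}^{[2]}f-B_{n}^{[2]}f\right)$ tends to $-\frac{1-x}{2}f_x^{\prime}(x,y)-\frac{1-y}{2}f_y^{\prime}(x,y)$.

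For this I set $\Delta_k:=t_{n,k}^2-\dfrac{k}{n}$ and observe that, directly from the definition of $R(n,k)$,
\[
\Delta_k=-R(n,k)-\frac{1}{2n}\left(1-\frac{k}{n}\right),
\]
while \eqref{eq.2.3} furnishes the uniform bound $|\Delta_k|\le 1/n$. Expanding $f$ by Taylor's formula around the Bernstein node $(k/n,l/n)$, and writing the derivatives at that node, I get
\[
f\!\left(t_{n,k}^2,t_{n,l}^2\right)-f\!\left(\tfrac{k}{n},\tfrac{l}{n}\right)=f_x^{\prime}\,\Delta_k+f_y^{\prime}\,\Delta_l+\rho_{k,l},
\]
where, since $f\in C^2$ and the increments are uniformly $O(1/n)$, the remainder obeys $|\rho_{k,l}|\le C(\Delta_k^2+\Delta_l^2)=O(1/n^2)$. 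Consequently $n\sum_{k,l}p_{n,k}(x)p_{n,l}(y)\rho_{k,l}=O(1/n)\to 0$, so only the two first-order sums survive, and in particular no second-order or mixed contribution remains from this difference.

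It then remains to evaluate $n\sum_{k,l}p_{n,k}(x)p_{n,l}(y)f_x^{\prime}(k/n,l/n)\Delta_k$ and its $\Delta_l$-analogue. Substituting the expression for $\Delta_k$ splits each into two pieces. The $R(n,k)$-piece is annihilated by the Lemma: because $R(n,k)\ge 0$ for $k\ge1$ by \eqref{eq.2.2} and the $k=0$ term contributes only $\tfrac12(1-x)^n\to0$, one has $n\sum_k p_{n,k}(x)|R(n,k)|\to0$, whence bounding $f_x^{\prime}$ by its supremum norm gives $\left|\,n\sum_{k,l}p_{n,k}(x)p_{n,l}(y)f_x^{\prime}(k/n,l/n)R(n,k)\right|\to0$. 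The surviving piece equals $-\tfrac12\,B_{n}^{[2]}\!\big((1-s)f_x^{\prime}(s,t)\big)(x,y)$, and since $(1-s)f_x^{\prime}(s,t)$ is continuous on $[0,1]^2$, the uniform convergence of the bivariate Bernstein operator yields the limit $-\tfrac{1-x}{2}f_x^{\prime}(x,y)$. The symmetric computation produces $-\tfrac{1-y}{2}f_y^{\prime}(x,y)$, and summing all contributions gives \eqref{eq.2.6}.

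I expect the main obstacle to be the twin task of discarding the Taylor remainder and of pulling the factor $f_x^{\prime}(k/n,l/n)$ through the Lemma. Both rest squarely on the preparatory estimates: the uniform bound $|\Delta_k|\le 1/n$ from \eqref{eq.2.3} makes the quadratic remainder $O(1/n^2)$, and the sign property $R(n,k)\ge0$ from \eqref{eq.2.2} (together with the trivial decay of the $k=0$ term from \eqref{eq.2.1}) upgrades the Lemma to the absolute-value form needed to absorb the continuous weight $f_x^{\prime}$. Once these two points are secured, everything else is routine summation using $\sum_k p_{n,k}(x)=1$ and the tensor structure.
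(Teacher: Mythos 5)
Your proposal is correct and takes essentially the same route as the paper: the same splitting through $B_n^{[2]}$, the same Taylor expansion at the nodes $(k/n,l/n)$ with the second-order remainder killed by \eqref{eq.2.3}, the same use of the Lemma upgraded to absolute values via \eqref{eq.2.2} and \eqref{eq.2.1}, and the same identification of the surviving piece as $-\tfrac12 B_n^{[2]}\bigl((1-s)f_x^{\prime}(s,t)\bigr)(x,y)$. The only cosmetic difference is that you bound the Taylor remainder by $C(\Delta_k^2+\Delta_l^2)$ rather than writing it in Lagrange form at intermediate points $(\xi,\eta)$, which changes nothing.
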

\begin{proof} First we have
	\begin{align*}
&n\left(B_{n,2}^{[2]}f(x,y)-B_n^{[2]}f(x,y)\right)\\&=n\displaystyle\sum_{k=0}^n\sum_{l=0}^np_{n,k}(x)p_{n,l}(y)
\left[f\left(\sqrt{\dfrac{k(k-1)}{n(n-1)}},\sqrt{\dfrac{l(l-1)}{n(n-1)}}\right)-f\left(\dfrac{k}{n},\dfrac{l}{n}\right)\right]\\
&=E_nf(x,y)+F_nf(x,y)+G_nf(x,y),
	\end{align*}
where
\begin{align*}
E_nf(x,y)&:=n\sum_{k=0}^n\sum_{l=0}^n p_{n,k}(x)p_{n,l}(y)\left(\sqrt{\dfrac{k(k-1)}{n(n-1)}}-\dfrac{k}{n}\right)f^{\prime}_x\left(\dfrac{k}{n},\dfrac{l}{n}\right),\\
F_nf(x,y)&:=n\sum_{k=0}^n\sum_{l=0}^n p_{n,k}(x)p_{n,l}(y)\left(\sqrt{\dfrac{l(l-1)}{n(n-1)}}-\dfrac{l}{n}\right)f^{\prime}_y\left(\dfrac{k}{n},\dfrac{l}{n}\right),\\
G_nf(x,y)&:=\dfrac{n}{2}\sum_{k=0}^n\sum_{l=0}^n p_{n,k}(x)p_{n,l}(y)\left\{\left(\sqrt{\dfrac{k(k-1)}{n(n-1)}}-\dfrac{k}{n}\right)^2f^{\prime\prime}_{x^2}(\xi,\eta)\right.\\
& +2\left(\sqrt{\dfrac{k(k-1)}{n(n-1)}}-\dfrac{k}{n}\right)\left(\sqrt{\dfrac{l(l-1)}{n(n-1)}}-\dfrac{l}{n}\right)f^{\prime\prime}_{xy}(\xi,\eta)\\
&+\left.\left(\sqrt{\dfrac{l(l-1)}{n(n-1)}}-\dfrac{l}{n}\right)^2f^{\prime\prime}_{y^2}(\xi,\eta)\right\},
\end{align*}
for suitable $(\xi,\eta)$ furnished by Taylor's formula. Using (\ref{eq.2.3}) we see that
\begin{equation}
	\label{eq.2.7}
	\displaystyle\lim_{n\to\infty}G_nf(x,y)=0.
\end{equation}
Moreover,
\begin{align*}
	&\displaystyle\lim_{n\to\infty} E_nf(x,y)\\
	&=-\displaystyle\lim_{n\to\infty} n\sum_{k=0}^n\sum_{l=0}^n p_{n,k}(x)p_{n,l}(y) \left[\dfrac{1}{2n}\left(1-\dfrac{k}{n}\right)+R(n,k)\right]f_x^{\prime}\left(\dfrac{k}{n},\dfrac{l}{n}\right)\\
	&=-\dfrac{1}{2}\displaystyle\lim_{n\to\infty}\sum_{k=0}^n\sum_{l=0}^n p_{n,k}(x)p_{n,l}(y)\left(1-\dfrac{k}{n}\right)f_x^{\prime}\left(\dfrac{k}{n},\dfrac{l}{n}\right)\\
	&-\displaystyle\lim_{n\to\infty}n\sum_{k=0}^n\sum_{l=0}^np_{n,k}(x)p_{n,l}(y)R(n,k)f_x^{\prime}\left(\dfrac{k}{n},\dfrac{l}{n}\right)\\
	&=-\dfrac{1}{2}\lim_{n\to\infty} B_n^{[2]}\left((1-s)f^{\prime}_x(s,t);(x,y)\right)\\
&-\displaystyle\lim_{n\to\infty}n\sum_{k=1}^n\sum_{l=0}^n p_{n,k}(x)p_{n,l}(y)R(n,k)f_x^{\prime}\left(\dfrac{k}{n},\dfrac{l}{n}\right)\\
&+\lim_{n\to\infty}n\sum_{l=0}^n (1-x)^np_{n,l}(y)\dfrac{1}{2n}f_x^{\prime}\left(0,\dfrac{l}{n}\right).
\end{align*}
The first term equals $-\dfrac{1}{2}(1-x)f_x^{\prime}(x,y)$.

Moreover, using (\ref{eq.2.2}) we have
\begin{align*}
	&\left|  \displaystyle n\sum_{k=1}^n\sum_{l=0}^np_{n,k}(x)p_{n,l}(y)R(n,k)f_x^{\prime}\left(\dfrac{k}{n},\dfrac{l}{n}\right) \right|\\
	&\leq\sum_{l=0}^n\left(n\sum_{k=1}^n p_{n,k}(x)R(n,k)
	\|f_x^{\prime}\|_{\infty}\right)p_{n,l}(y)\\
	&=n\sum_{k=1}^n p_{n,k}(x)R(n,k)\|f_x^{\prime}\|_{\infty},
\end{align*}
and (\ref{eq.2.4}) shows that the second term is zero. The third one is also zero, and so
\begin{equation}
	\label{eq.2.8}
	\displaystyle\lim_{n\to\infty} E_n f(x,y)=-\dfrac{1-x}{2}f_x^{\prime}(x,y).
\end{equation}
Similarly,
\begin{equation}
	\label{eq.2.9} 
	\lim_{n\to\infty} F_nf(x,y)=-\dfrac{1-y}{2}f_y^{\prime}(x,y).
\end{equation}
Now (\ref{eq.2.7}), (\ref{eq.2.8}), (\ref{eq.2.9}) yield
\begin{equation}
	\label{eq.2.10}
	\displaystyle\lim_{n\to\infty}n\left(B_{n,2}^{[2]}f(x,y)-B_n^{[2]}f(x,y)\right)=-\dfrac{1-x}{2}f_x^{\prime}(x,y)-\dfrac{1-y}{2}f_y^{\prime}(x,y).
\end{equation}
On the other hand, it is well known that
\begin{equation}
	\label{eq.2.11}
	\displaystyle\lim_{n\to\infty} n(B_n^{[2]}f(x,y)-f(x,y))=\dfrac{x(1-x)}{2}f_{x^2}^{\prime\prime}(x,y)+\dfrac{y(1-y)}{2}f_{y^2}^{\prime\prime}(x,y).
\end{equation}
From (\ref{eq.2.10}) and (\ref{eq.2.11}) we get (\ref{eq.2.6}) and the theorem is proved.
\end{proof}

\end{document}